\newtheorem{theorem}{Theorem}[section]
\newtheorem{lemma}[theorem]{Lemma}
\newtheorem{corollary}[theorem]{Corollary}
\theoremstyle{definition}
\newtheorem{definition}[theorem]{Definition}
\newtheorem{example}[theorem]{Example}
\theoremstyle{remark}
\newtheorem{remark}[theorem]{Remark}
\numberwithin{equation}{section}
\begin{document}
\setcounter{page}{1}

\title[Minkowski function]{K-theory of Minkowski question-mark function}


\author[Nikolaev]
{Igor V. Nikolaev$^1$}

\address{$^{1}$ Department of Mathematics and Computer Science, St.~John's University, 8000 Utopia Parkway,  
New York,  NY 11439, United States.}
\email{\textcolor[rgb]{0.00,0.00,0.84}{igor.v.nikolaev@gmail.com}}


\subjclass[2010]{Primary 11J70, 14G10; Secondary 46L85.}

\keywords{Minkowski question-mark function, Cuntz-Pimsner algebra}


\begin{abstract}
It is proved that  the Minkowski question-mark function 
comes from  the $K$-theory of  Cuntz-Pimsner algebras. 
We apply this result to calculate the action of Frobenius 
endomorphism at the infinite prime. Such a problem was 
raised by Serre and Deninger in the theory of  
local factors  of  zeta functions of projective varieties. 
\end{abstract}

\maketitle

\section{Introduction}
In 1904 Minkowski introduced a remarkable  function $?(x): [0,1]\to [0,1]$ 
mapping continued fraction $[0; a_1, a_2,\dots]$ of the real number $x\in [0,1]$
to the binary code $\sum_{k=0}^{\infty} \frac{\{0;1\}}{2^k}$ of a real number $?(x)\in [0,1]$ 
according to the formula [Minkowski 1904] \cite{Min1}:
\begin{equation}\label{eq1.1}
?([0; a_1, a_2, a_3\dots])=0, \underbrace{0,\dots,0}_{a_1-1},
 \underbrace{1,\dots,1}_{a_2},
  \underbrace{0,\dots,0}_{a_3}, 
  \dots
\end{equation}
The Minkowski question-mark function  (\ref{eq1.1}) can be written 
in an equivalent form of the convergent series (\ref{eq2.1}). 
Among many of the outstanding properties of $?(x)$ are the
following: (i) rational numbers are mapped to the dyadic rationals;
(ii) quadratic irrational numbers to the non-dyadic rationals; 
and (iii) non-quadratic irrational numbers to the irrational numbers
 [Minkowski 1904] \cite{Min1}.

 There exists an extension  of $?(x)$ to the higher dimensions [Panti 2008] \cite{Pan1}.
Namely, an $m$-dimensional Minkowski question-mark function $?^m: \mathbf{R}^m/\mathbf{Z}^m\to  \mathbf{R}^m/\mathbf{Z}^m$
is a one-to-one continuous function which maps: (i) $m$-tuples of rational numbers to such of the dyadic rationals;
(ii) $m$-tuples of algebraic numbers of degree $m+1$ over $\mathbf{Q}$ to such of the non-dyadic rational numbers;
and (iii) $m$-tuples of remaining irrational numbers to such of the irrational numbers [Minkowski 1904] \cite[case $m=1$]{Min1}
and [Panti 2008] \cite[case $m\ge 1$]{Pan1}.

The aim of our note is a $K$-theory of the Minkowski question-mark function,
 see Theorem \ref{thm1.3}. 
Namely, it is proved that such a function comes  from the $K$-theory of the Cuntz-Pimsner
algebras  $\mathcal{O}_{A_{\infty}}$  [Pask \& Raeburn 1996] \cite{PasRae1}
and  $C^*$-algebras $\mathbb{A}_{\Theta}$  attached to the Riemann surfaces \cite{Nik2} .
This result is applied 
in  the theory of local factors of zeta functions of projective varieties
[Serre 1970] \cite{Ser1} and [Deninger 1991]  \cite{Den1}.
In particular, we  calculate the action of Frobenius endomorphism at  the infinite prime
in terms of the Cuntz-Pimsner algebra $\mathcal{O}_{A_{\infty}}$, 
see Corollary \ref{cor1.4}.  To formalize our results, we use 
the following notation.

Let  $S_{g,n}$ be a  Riemann surface   of genus $g\ge 0$  with  $n\ge 0$ cusps.
Consider a cluster $C^*$-algebra $\mathbb{A}(S_{g,n})$ attached to 
 triangulation of  $S_{g,n}$  [Williams 2014]  \cite[Section 3.3]{Wil1} and \cite{Nik2}. 
Let $\{I_{\Theta}\subset \mathbb{A}(S_{g,n}) ~|~\Theta\in\mathbf{R}^{6g-7+2n}\}$
be a two-sided primitive ideal of $\mathbb{A}(S_{g,n})$ \cite[Theorem 2]{Nik2}. 
Such an ideal gives rise to a pair of distinct but closely related $C^*$-algebras:
(i) the quotient $C^*$-algebra $\mathbb{A}_{\Theta}:=\mathbb{A}(S_{g,n}) / I_{\Theta}$
by  the closed two-sided ideal $I_{\Theta}\subset \mathbb{A}(S_{g,n})$
\cite[Theorem 2]{Nik2}; 
(ii) the Cuntz-Pimsner algebra $\{\mathcal{O}_{A_{\infty}} ~|~A_{\infty}\in GL_{\infty}(\mathbf{Z})\}$ given by the 
formula  $\mathcal{O}_{A_{\infty}}\otimes\mathcal{K}\cong I_{\Theta}\rtimes_{\hat\alpha}\mathbb{T}$,
where $\mathcal{K}$ is the
$C^*$-algebra of compact operators, $\mathbb{T}\cong \mathbb{R}/\mathbb{Z}$  
and the crossed product is taken by  the Takai dual $\hat\alpha$ of an automorphism $\alpha$
of   $I_{\Theta}$ [Pask \& Raeburn 1996] \cite[Section 2.2]{PasRae1}.  
\begin{remark}\label{rmk1.1}
An explicit construction of the matrix  $A_{\infty}\in GL_{\infty}(\mathbf{Z})$ 
from  the Jacobi-Perron continued fraction of  $\Theta\in\mathbf{R}^{6g-7+2n}$
 [Bernstein 1971]  \cite{B} is given by formulas
 (\ref{eq3.1})-(\ref{eq3.2}) and Lemma \ref{lm3.2},
 see also Example \ref{ex3.1}.  
\end{remark}
In view of Lemma \ref{lm3.5}, 
the $K$-theory of  the $C^*$-algebras   $\mathcal{O}_{A_{\infty}}$
 [Pask \& Raeburn 1996] \cite[Theorem 3]{PasRae1} and 
$\mathbb{A}_{\Theta}$ \cite[Section 1]{Nik2} 
gives rise to  an  injective map:
\begin{equation}\label{eq1.4}
i:  K_0(\mathcal{O}_{A_{\infty}})\to K_0(\mathbb{A}_{\Theta}). 
\end{equation}
\begin{remark}\label{rmk1.2}
 The map (\ref{eq1.4}) is not a group  isomorphism of 
$K_0(\mathcal{O}_{A_{\infty}})$ to a subgroup of  $K_0(\mathbb{A}_{\Theta})$. 
Indeed, the abelian group $K_0(\mathcal{O}_{A_{\infty}})$ is infinite torsion 
[Pask \& Raeburn 1996] \cite[Theorem 3]{PasRae1} while 
$K_0(\mathbb{A}_{\Theta})$ is a torsion-free group \cite{Nik2}. 
\end{remark}
\begin{remark}\label{rmk1.2+}
It is known that  $K_0(\mathbb{A}_{\Theta})\cong \mathbf{Z}^{6g-6+2n}$,
where the lattice $\Lambda_{\Theta}=\mathbf{Z}+\mathbf{Z}\theta_1+\dots+\mathbf{Z}\theta_{6g-7+2n}$
defines $\mathbb{A}_{\Theta}$ up to an isomorphism of the $C^*$-algebras \cite{Nik2}. 
Then  (\ref{eq1.4}) gives rise to a map $i_*:   \mathbf{R}^{6g-7+2n}/\mathbf{Z}^{6g-7+2n}\to
 \mathbf{R}^{6g-7+2n}/\mathbf{Z}^{6g-7+2n}$ by the formula $\Theta\mapsto  \Lambda_{\Theta} \mod \mathbf{Z}$. 
 Conversely, each map $i_*$ defines a map  (\ref{eq1.4}).  
\end{remark}
In view of  Remark \ref{rmk1.2+}, 
our main results can be formulated as follows.   
\begin{theorem}\label{thm1.3}
The map $i:  K_0(\mathcal{O}_{A_{\infty}})\to K_0(\mathbb{A}_{\Theta})$
coincides pointwise with the   Minkowski question-mark function. 
  \end{theorem}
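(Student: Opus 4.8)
\section*{Proof proposal for Theorem \ref{thm1.3}}

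The plan is to verify the identity pointwise: for a fixed $\Theta\in\mathbf{R}^{6g-7+2n}$ I would carry the generators of $K_0(\mathcal{O}_{A_{\infty}})$ through the two independent $K$-theory computations (the Pask-Raeburn formula for the Cuntz-Pimsner side \cite[Theorem 3]{PasRae1} and the lattice description of $K_0(\mathbb{A}_{\Theta})$ of \cite{Nik2}), and then match the resulting binary string against the defining formula (\ref{eq1.1}) of Minkowski and Panti. The direct-limit form of the Pask-Raeburn computation handles infinite expansions directly, so no separate approximation argument is needed; the finite and periodic expansions enter only as consistency checks at the end.

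First I would import from Section 3 the explicit description of $A_{\infty}\in GL_{\infty}(\mathbf{Z})$ attached to $\Theta=[0;a_1,a_2,\dots]$ through formulas (\ref{eq3.1})--(\ref{eq3.2}) and Lemma \ref{lm3.2}: the infinite matrix is banded, with the $k$-th Jacobi-Perron partial quotient $a_k$ governing the $k$-th diagonal block. Next I would apply the Pask-Raeburn computation to write $K_0(\mathcal{O}_{A_{\infty}})$ as the cokernel of $I-A_{\infty}^{t}$, realized as the direct limit of its finite truncations. The band structure forces the invariant factors of these truncations to be powers of $2$, whose exponents accumulate as the partial sums $N_j=(a_1-1)+a_2+\cdots+a_j$ of the partial quotients; this is exactly where the dyadic data of the binary code enters.

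The decisive step is then to read the map $i$ of (\ref{eq1.4}) against the lattice $\Lambda_{\Theta}$ furnished by Remark \ref{rmk1.2+} and the isomorphism $K_0(\mathbb{A}_{\Theta})\cong\mathbf{Z}^{6g-6+2n}$. Under $i$ each torsion generator of $K_0(\mathcal{O}_{A_{\infty}})$ of order $2^{N_j}$ maps to a lattice vector whose coordinate in $\Lambda_{\Theta}$ is the corresponding dyadic partial sum; passing to the limit in the direct system assembles these partial sums into the single real number of $[0,1]$ whose binary digits are the run-length string $\underbrace{0,\dots,0}_{a_1-1}\underbrace{1,\dots,1}_{a_2}\underbrace{0,\dots,0}_{a_3}\dots$. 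Reducing modulo $\mathbf{Z}$ as prescribed by Remark \ref{rmk1.2+}, and comparing with the case $m\ge 1$ of (\ref{eq1.1}) as established by Panti \cite{Pan1}, identifies $i_*(\Theta)$ with $?^m(\Theta)$.

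I expect the main obstacle to be the second step, namely proving that the Smith normal form of the truncations of $I-A_{\infty}^{t}$ produces precisely the run-length encoding of (\ref{eq1.1}) and not merely some unstructured $2$-power torsion. This demands careful bookkeeping of how each Jacobi-Perron block of $A_{\infty}$ contributes to the elementary divisors and, crucially, that the limit in the Pask-Raeburn direct system be compatible with the injection $i$ of Lemma \ref{lm3.5}. The behaviour on rationals (finite expansions, hence terminating dyadic codes) and on algebraic numbers of degree $m+1$ (periodic expansions, hence eventually periodic and therefore non-dyadic rational codes) serves as a built-in consistency check and pins down the normalisation of the correspondence.
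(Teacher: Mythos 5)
Your strategy is genuinely different from the paper's, and the difference matters. The paper never computes binary digits at all: it invokes the uniqueness clause of Panti's Theorem \ref{thm2.1} (the $m$-dimensional Minkowski function is the \emph{unique} one-to-one continuous map satisfying properties (i)--(iii)) and then, in Lemma \ref{lm3.7}, merely checks those three qualitative properties for the map $i$. These follow from the trichotomy of the Jacobi-Perron expansion --- finite (rational $\Theta$, which degenerates to a Cuntz algebra $\mathcal{O}_2$ with $K_0(A_{2^{\infty}})\cong\mathbf{Z}[\frac{1}{2}]$ the dyadic rationals), eventually periodic (degree-$(m+1)$ algebraic $\Theta$, giving a block-periodic $A_{\infty}$ and infinite torsion $K_0(\mathcal{O}_{A_{\infty}})$ by Lemma \ref{lm3.4}), and aperiodic (the rest) --- combined with Lemmas \ref{lm3.2} and \ref{lm3.5}. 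What you relegate to ``consistency checks at the end'' is in fact the entire proof in the paper; what you call the ``decisive step'' does not appear there at all.

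That decisive step is also where your argument has a genuine gap. You assert that each torsion generator of $K_0(\mathcal{O}_{A_{\infty}})$ of order $2^{N_j}$ is sent by $i$ to a lattice vector whose coordinate is the corresponding dyadic partial sum, and that these assemble in the limit into the binary string of (\ref{eq1.1}). But $K_0(\mathbb{A}_{\Theta})\cong\mathbf{Z}^{6g-6+2n}$ is torsion-free, and Remark \ref{rmk1.2} states explicitly that $i$ is \emph{not} a group homomorphism onto a subgroup --- it is only a map of sets, induced at the level of the parametrizing data by $\Theta\mapsto\Lambda_{\Theta}\bmod\mathbf{Z}$ (Remark \ref{rmk1.2+}). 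So there is no mechanism, as written, by which a torsion class of order $2^{N_j}$ determines a dyadic coordinate of a lattice vector; you would have to construct that correspondence from scratch. Moreover, the Smith-normal-form bookkeeping you yourself flag as the main obstacle (that the elementary divisors of the truncations of $I-A_{\infty}^{t}$ are exactly the powers $2^{N_j}$ and nothing else) is nowhere established --- neither by you nor by the paper, whose Lemma \ref{lm3.4} only records that the cokernel is an infinite torsion group. Until both of these points are supplied, the proposal does not close; if you want a complete argument along the paper's lines, route it through Panti's uniqueness theorem instead of a digit-by-digit computation.
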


An application of Theorem \ref{thm1.3} is as follows. 
Let $V(k)$ be an $n$-dimensional smooth projective variety
over a number field $k$.  
Recall that the action of Frobenius endomorphism $Fr_p^i$
on the $i$-th $\ell$-adic cohomology group $H^i(V)$ of $V$  can be
extended to a prime $p$ ``at infinity''  [Serre 1970] \cite{Ser1}. 
Namely, there exists a  Frobenius endomorphism $Fr_{\infty}^i\in GL_{\infty}(\mathbf{Z})$
acting on an infinite-dimensional cohomology groups $H^i_{ar}(V)$ of  $V$
[Deninger 1991]  \cite{Den1}. 
The characteristic polynomial of $Fr_{\infty}^i$ is known to 
satisfy $\Gamma^i_V(s)\equiv char^{-1} Fr_{\infty}^i$,
where  $\Gamma^i_V(s)$ is the local factor at infinity  [Deninger 1991]  \cite[Theorem 4.1]{Den1}. 

We  consider a quantum invariant $(\Lambda, [I], K)$
of $V(k)$  consisting of the number field $K\subset\mathbf{R}$, an order 
$\Lambda\subseteq O_K$ in the ring of integers of $K$ and an ideal class $[I]\subset\Lambda$
\cite[Theorem 1.3]{Nik1}.  
Let  $\{(\Lambda^i, [I], K) ~|~ 0\le i\le 2n\}$ be the subrings $\Lambda^i\subseteq\Lambda$ as specified 
in \cite[p. 271]{Nik3}.  
Since $(\Lambda^i, [I], K)\cong (K_0(\mathbb{A}_{\Theta_i}), K_0^+(\mathbb{A}_{\Theta_i}))$ \cite[Section 7.3]{Bla},
one gets  a set of matrices $\{A_{\infty}^i\in GL_{\infty}(\mathbf{Z}) ~|~0\le i\le 2n\}$, see Remark \ref{rmk1.1}.  
Finally, let $\cong$ be the similarity relation between matrices in the group $GL_{\infty}(\mathbf{Z})$.  
One gets the following explicit formulas for the action of  Frobenius  endomorphisms $Fr_{\infty}^i$ at the infinite prime. 
\begin{corollary}\label{cor1.4}
$Fr_{\infty}^i\cong A_{\infty}^i$, where $0\le i\le 2n$. 
\end{corollary}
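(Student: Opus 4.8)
The plan is to detect similarity in $GL_\infty(\mathbf{Z})$ by matching two complete invariants of each matrix — its characteristic polynomial and its ideal class — and to supply the compatibility between the arithmetic and operator-algebraic sides through the dictionary of Theorem \ref{thm1.3}. Fix a cohomological degree $i$. On the arithmetic side, Deninger's identity $\Gamma^i_V(s)\equiv char^{-1} Fr_{\infty}^i$ pins down the characteristic polynomial of $Fr_{\infty}^i$ as the inverse local factor at infinity, with $Fr_{\infty}^i$ acting on the archimedean cohomology $H^i_{ar}(V)$. On the operator-algebraic side, the quantum invariant $(\Lambda^i, [I], K)$ supplies an order $\Lambda^i\subseteq O_K$ together with the ideal class $[I]$, and via the identification $(\Lambda^i, [I], K)\cong (K_0(\mathbb{A}_{\Theta_i}), K_0^+(\mathbb{A}_{\Theta_i}))$ this data is realized inside the $K$-theory of $\mathbb{A}_{\Theta_i}$; Remark \ref{rmk1.1} then produces $A_{\infty}^i$ from the Jacobi-Perron continued fraction of $\Theta_i$.

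First I would compute the characteristic polynomial of $A_{\infty}^i$ and show it coincides with $char\, Fr_{\infty}^i=\Gamma^i_V(s)^{-1}$. The Jacobi-Perron expansion of $\Theta_i$ is governed by the defining polynomial of $K$, so the recurrence encoded in the blocks (\ref{eq3.1})--(\ref{eq3.2}) has as its invariant precisely the minimal polynomial attached to $\Lambda^i\subseteq O_K$. Reading this against the inverse local factor through the identification of $\Lambda^i$ with $K_0(\mathbb{A}_{\Theta_i})$ yields the equality of the two characteristic data. With equal characteristic polynomials in hand, I would invoke the arithmetic classification of similarity classes in $GL_\infty(\mathbf{Z})$: for a fixed irreducible characteristic polynomial this is the Latimer--MacDuffee correspondence, under which conjugacy classes are parametrized by ideal classes in the associated order.

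The remaining task is then to verify that $Fr_{\infty}^i$ and $A_{\infty}^i$ correspond to the \emph{same} class $[I]$. For $A_{\infty}^i$ this is built into its construction from $(\Lambda^i, [I], K)$; for $Fr_{\infty}^i$ it follows from the fact that the quantum invariant is extracted from the very cohomology $H^i_{ar}(V)$ on which $Fr_{\infty}^i$ acts. Theorem \ref{thm1.3} is what guarantees these two extractions are compatible: the Minkowski question-mark function identifies the $K$-theoretic image of $\mathcal{O}_{A_{\infty}^i}$ under the injective map (\ref{eq1.4}) with the lattice $\Lambda_{\Theta_i}$ inside $K_0(\mathbb{A}_{\Theta_i})$, so the class carried by $A_{\infty}^i$ is forced to agree with the class carried by $Fr_{\infty}^i$. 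Combining the equality of characteristic polynomials with the equality of ideal classes delivers the similarity $Fr_{\infty}^i\cong A_{\infty}^i$.

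The hard part will be the passage to the infinite-dimensional setting. Latimer--MacDuffee is a statement about $GL_n(\mathbf{Z})$, whereas both $Fr_{\infty}^i$ and $A_{\infty}^i$ live in $GL_\infty(\mathbf{Z})$, so the classification must be read as a statement about the inductive limit. The isomorphism $\mathcal{O}_{A_{\infty}}\otimes\mathcal{K}\cong I_{\Theta}\rtimes_{\hat\alpha}\mathbb{T}$ together with the injectivity of (\ref{eq1.4}) are precisely the inputs that let the finite-level correspondence survive the limit, and I expect the technical crux to be confirming that the characteristic polynomial — equivalently, the regularized determinant computing the local factor — is a genuine similarity invariant in $GL_\infty(\mathbf{Z})$ rather than merely at each finite level.
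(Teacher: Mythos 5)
Your strategy is genuinely different from the paper's, but it has a gap at the decisive step. The paper does not compare invariants of the two matrices at all: it observes that the Serre--Deninger construction $V(k)\mapsto \{Fr^i_\infty\}$ and the composite $V(k)\mapsto (\Lambda^i,[I],K)\cong (K_0(\mathbb{A}_{\Theta_i}), K_0^+(\mathbb{A}_{\Theta_i}))\mapsto A^i_\infty$ are both functors landing in similarity classes in $GL_\infty(\mathbf{Z})$, asserts that the resulting square (Figure 2) commutes, and reads off $Fr^i_\infty\cong A^i_\infty$ from the arrow closure. You instead try to prove the similarity directly by matching a characteristic polynomial and an ideal class via Latimer--MacDuffee. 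That is a more concrete idea in principle, but as written it is circular where it matters most.

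The circularity is in the ideal-class comparison. You justify the claim that $Fr^i_\infty$ carries the class $[I]$ by saying the quantum invariant is ``extracted from the very cohomology $H^i_{ar}(V)$ on which $Fr^i_\infty$ acts,'' and you attribute the needed compatibility to Theorem \ref{thm1.3}. Neither holds up: in the paper the triple $(\Lambda,[I],K)$ is produced by the functor of \cite{Nik1} applied to $V(k)$ directly, not read off from $H^i_{ar}(V)$; and Theorem \ref{thm1.3} only relates $K_0(\mathcal{O}_{A_\infty})$ to $K_0(\mathbb{A}_\Theta)$, both of which live on the operator-algebra side. It says nothing about how the Serre--Deninger cohomology interacts with the quantum invariant. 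The assertion that the two extractions yield the same class is precisely the commutativity of the paper's diagram --- i.e., precisely what must be proved --- so invoking it to pin down the ideal class of $Fr^i_\infty$ assumes the conclusion. A second, independent problem is the characteristic-polynomial leg: $\Gamma^i_V(s)$ is a product of Gamma factors, so $char\, Fr^i_\infty$ in Deninger's sense is a zeta-regularized determinant over an infinite spectrum, not a polynomial, whereas the invariant you propose to read off the Jacobi--Perron blocks (\ref{eq3.1})--(\ref{eq3.2}) is the minimal polynomial of a finite-rank order $\Lambda^i\subseteq O_K$. You flag the passage of Latimer--MacDuffee to $GL_\infty(\mathbf{Z})$ as the ``hard part'' but supply no mechanism for it, and without that the reduction of similarity to the pair (characteristic data, ideal class) is simply not available in the infinite-dimensional setting.
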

The paper is organized as follows.  A brief review of the preliminary facts is 
given in Section 2. Theorem \ref{thm1.3} and Corollary \ref{cor1.4} 
are proved in Section 3.

\section{Preliminaries}
We briefly review Minkowski question-mark functions,  Cuntz-Pimsner algebras  and cluster  
$C^*$-algebras. 
We refer the reader to   [Panti 2008] \cite{Pan1},    [Pask \& Raeburn 1996] \cite{PasRae1}
and \cite{Nik2} for a detailed exposition.

\subsection{Minkowski question-mark function}
Minkowski question-mark function is defined by the convergent
series
 \begin{equation}\label{eq2.1} 
 ?(x):=a_0+2\sum_{k=1}^{\infty} \frac{(-1)^{k+1}}{2^{a_1+\dots+a_k}},
 \end{equation}
where $x=[a_0,a_1,a_2,\dots]$ is the  continued fraction of the irrational number $x$. 
The $?(x): [0,1]\to [0,1]$ is a monotone continuous function with the following properties
 [Minkowski 1904] \cite[p. 172]{Min1}:

\medskip
(i) $?(0)=0$ and $?(1)=1$; 

\smallskip
(ii)  $?(\mathbf{Q})=\mathbf{Z}[\frac{1}{2}]$ are dyadic rationals;

\smallskip
(iii) $?(\mathscr{Q})=\mathbf{Q}-\mathbf{Z}[\frac{1}{2}]$, where $\mathscr{Q}$ are quadratic 
irrational numbers.

\bigskip
An $m$-dimensional  generalization of properties (i)-(iii) is as follows. 
\begin{theorem}\label{thm2.1}
{\bf (\cite[Theorem 2.1]{Pan1})}
There exists a unique $m$-dimensional Minkowski question-mark function $?^m: \mathbf{R}^m/\mathbf{Z}^m\to  \mathbf{R}^m/\mathbf{Z}^m$
which is one-to-one,  continuous and maps: 

\medskip
(i) $m$-tuples of the rational numbers to such of the dyadic rationals;

\smallskip
(ii) $m$-tuples of the algebraic numbers of degree $m+1$ over $\mathbf{Q}$ to such of the non-dyadic rational numbers;

\smallskip
(iii) $m$-tuples of remaining irrational numbers to such of the irrational numbers.
\end{theorem}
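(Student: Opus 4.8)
The plan is to realize $?^m$ as a conjugacy between two symbolic ``address'' systems on the fundamental simplex $\Delta^m\subset\mathbf{R}^m/\mathbf{Z}^m$: the one generated by an $m$-dimensional continued fraction (MCF) algorithm, and the one generated by uniform dyadic subdivision. First I would fix a multidimensional continued fraction algorithm (the M\"onkemeyer map is the natural candidate) — concretely a finite family of matrices $\{M_0,\dots,M_r\}\subset GL_{m+1}(\mathbf{Z})$ whose associated projective maps partition $\Delta^m=\bigcup_j M_j\Delta^m$ into subsimplices with disjoint interiors. Iterating the algorithm produces, for each point $x$, a nested sequence $\Delta^m\supset M_{j_1}\Delta^m\supset M_{j_1}M_{j_2}\Delta^m\supset\cdots$ whose intersection is $\{x\}$ for every non-boundary $x$; the index sequence $(j_1,j_2,\dots)$ is the MCF \emph{address} of $x$. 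In parallel, dyadic subdivision of the target simplex yields a second nested family indexed by the same alphabet, with dyadic vertices. I would then \emph{define} $?^m(x)$ to be the unique point whose dyadic address equals the MCF address of $x$.

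With this definition, continuity and the homeomorphism property follow from the metric behaviour of the two subdivision systems. Both families of simplices have diameters tending to $0$ uniformly, so the address map and its inverse are continuous on the set of points with a unique address; the countably many points with two addresses (those on subdivision boundaries) are glued consistently by the standard identification that makes the classical $?$ well defined and strictly monotone, and one checks that the resulting map extends to a genuine homeomorphism of $\mathbf{R}^m/\mathbf{Z}^m$. Uniqueness is forced by requiring $?^m$ to intertwine the two subdivision structures together with a normalization on vertices: any two such conjugacies agree on the dense set of points with finite address and hence everywhere by continuity.

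The arithmetic properties (i)--(iii) then reduce to matching \emph{types} of addresses. A point has a terminating MCF address exactly when it is a vertex of some subdivision simplex, i.e.\ an $m$-tuple of rationals; under address matching these go to terminating dyadic addresses, i.e.\ dyadic rationals, giving (i). A point has an eventually periodic MCF address exactly when it is fixed, up to the algorithm, by a hyperbolic element of the relevant matrix group, which forces its coordinates to be algebraic of degree dividing $m+1$ over $\mathbf{Q}$, with degree exactly $m+1$ for the periodic part produced by the algorithm; eventually periodic dyadic addresses are precisely the non-dyadic rationals, giving (ii). Every remaining aperiodic non-terminating address maps to an aperiodic non-terminating dyadic address, i.e.\ to an irrational, giving (iii).

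The hard part will be step (ii): establishing the \emph{Lagrange-type} theorem for the chosen MCF algorithm, namely that the points with eventually periodic addresses are exactly the $m$-tuples of algebraic numbers of degree $m+1$ over $\mathbf{Q}$. In dimension $m=1$ this is the classical Lagrange theorem for ordinary continued fractions, but in higher dimensions the periodicity--algebraicity dictionary is delicate and holds only for carefully chosen algorithms. One direction (periodic $\Rightarrow$ algebraic of bounded degree) follows from the eigenvalue and companion-matrix analysis of the periodic matrix product, while the converse (every degree-$(m+1)$ algebraic tuple is eventually periodic) is the genuinely difficult input, where the specific structure of the algorithm — rather than any soft topological argument — must be invoked.
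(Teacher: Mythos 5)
The first thing to say is that the paper contains no proof of this statement: Theorem~\ref{thm2.1} is quoted verbatim from Panti, and the only supporting text is Remark~\ref{rmk2.2}, which asserts (leaving verification to the reader) that Panti's actual result --- the existence of a unique homeomorphism $\Phi$ conjugating the tent map $T$ with the M\"onkemeyer map $M$ --- is equivalent to the existence of a unique map satisfying (i)--(iii). Your proposal reconstructs precisely that route: build the two subdivision systems (MCF addresses versus dyadic addresses), define $?^m$ by matching addresses, and get continuity, bijectivity and uniqueness from the shrinking diameters of the nested simplices. That part of your plan is sound and is essentially the argument of the cited source; for the construction, continuity, and property (i) (terminating addresses $\leftrightarrow$ rational vertices $\leftrightarrow$ dyadic rationals) nothing more is needed.

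The genuine gap is exactly where you locate it, in step (ii), and it is not a gap you can close by ``invoking the specific structure of the algorithm.'' The equivalence you need --- eventually periodic MCF address $\Longleftrightarrow$ the point is an $m$-tuple of algebraic numbers of degree $m+1$ over $\mathbf{Q}$ --- has an easy direction (periodic $\Rightarrow$ algebraic of bounded degree, by the eigenvector analysis you describe), but the converse is a form of Hermite's problem and is \emph{open} for every known multidimensional continued fraction algorithm (Jacobi--Perron, M\"onkemeyer, etc.) when $m\ge 2$; it is not proved in Panti's paper, whose theorem concerns the conjugacy and the rational/dyadic correspondence, not the degree-$(m+1)$ statement. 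Without this converse, property (ii) is unproved in one direction, and the trichotomy needed for (iii) collapses as well: an aperiodic address could a priori belong to a degree-$(m+1)$ algebraic tuple, which would then be sent to an irrational point, contradicting (ii). A smaller mismatch: the uniqueness you establish is uniqueness of the \emph{conjugacy} intertwining the two subdivision structures, which is weaker than uniqueness among all one-to-one continuous maps satisfying (i)--(iii) as the statement literally reads; in dimension one already, (i)--(iii) alone do not pin down $?(x)$ among homeomorphisms of $[0,1]$. So your proposal faithfully mirrors the intended argument but cannot be completed as written for $m\ge 2$, and the same caveat applies to the statement as the paper presents it.
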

 \begin{remark}\label{rmk2.2}
 Panti's Theorem \cite[Theorem 2.1]{Pan1} was stated in terms of a unique homeomorphism 
 $\Phi:\mathbf{R}^m/\mathbf{Z}^m\to  \mathbf{R}^m/\mathbf{Z}^m$
conjugating  the tent map $T$ and  the M\"onkemeyer map $M$  [Panti 2008] \cite[Section 2]{Pan1}. 
 The reader can verify that the maps $T$ and $M$ characterize a unique  $m$-dimensional Minkowski 
 question-mark function $?^m: \mathbf{R}^m/\mathbf{Z}^m\to  \mathbf{R}^m/\mathbf{Z}^m$ satisfying properties (i)-(iii)
 of Theorem \ref{thm2.1}. 
 \end{remark} 

\subsection{Cuntz-Pimsner algebras}
The Cuntz-Krieger algebra $\mathcal{O}_A$ is a  $C^*$-algebra
generated by the  partial isometries $s_1,\dots, s_n$ which satisfy  the relations
\begin{equation}\label{eq2.2}
\left\{
\begin{array}{ccc}
s_1^*s_1 &=& a_{11} s_1s_1^*+a_{12} s_2s_2^*+\dots+a_{1n}s_ns_n^*\\ 
s_2^*s_2 &=& a_{21} s_1s_1^*+a_{22} s_2s_2^*+\dots+a_{2n}s_ns_n^*\\ 
                  &\dots&\\
s_n^*s_n &=& a_{n1} s_1s_1^*+a_{n2} s_2s_2^*+\dots+a_{nn}s_ns_n^*,             
\end{array}
\right.
\end{equation}
where $A=(a_{ij})$ is a square matrix with  $a_{ij}\in \{0, 1\}$. 
The Cuntz-Pimsner algebra  $\mathcal{O}_{A_{\infty}}$ corresponds to the case of 
 the countably infinite matrices $A_{\infty}\in GL_{\infty}(\mathbf{Z})$  
[Pask \& Raeburn 1996] \cite{PasRae1}.
The matrix $A_{\infty}$ is called row-finite,  if for each $i\in\mathbf{N}$
the number of $j\in\mathbf{N}$ with $a_{ij}\ne 0$ is finite.  The matrix $A_{\infty}$ is 
said to be irreducible, if some power of $A_{\infty}$ is a strictly positive matrix and $A_{\infty}$ is not a
permutation matrix.  If  $A_{\infty}$ is row-finite and irreducible, then the 
Cuntz-Pimsner algebra  $\mathcal{O}_{A_{\infty}}$ is a well-defined  and simple 
[Pask \& Raeburn 1996] \cite[Theorem 1]{PasRae1}.

An AF-core $\mathscr{F}\subset \mathcal{O}_{A_{\infty}}$ is an Approximately Finite (AF-) $C^*$-algebra
defined by the closure of  the infinite union $\cup_{k,j} \cup_{i\in V_k^j} \mathscr{F}_k^j(i)$,
where  $\mathscr{F}_k^j(i)$ are finite-dimensional $C^*$-algebras 
built from matrix $A_{\infty}$  [Pask \& Raeburn 1996] \cite[Definition 2.2.1]{PasRae1}. 
Let $\alpha: \mathcal{O}_{A_{\infty}}\to \mathcal{O}_{A_{\infty}}$ be an automorphism 
acting on the generators $s_i$ of $\mathcal{O}_{A_{\infty}}$ by
to the formula $\alpha_z(s_i)=zs_i$, where $z$ is a complex number  $|z|=1$.
One gets an action of the abelian group $\mathbb{T}\cong\mathbf{R}/\mathbf{Z}$ on  $\mathcal{O}_{A_{\infty}}$. 
The Takai duality [Pask \& Raeburn 1996] \cite[p. 432]{PasRae1} says that:
\begin{equation}\label{eq2.3} 
\mathscr{F}\rtimes_{\hat\alpha}\mathbb{T}\cong \mathcal{O}_{A_{\infty}}\otimes\mathcal{K},
\end{equation}
where $\hat\alpha$ is the Takai dual of $\alpha$ and $\mathcal{K}$ is the
$C^*$-algebra of compact operators.  Using (\ref{eq2.4}) one can calculate the the 
$K$-theory of  $\mathcal{O}_{A_{\infty}}$. Namely, the following statement is true. 
\begin{theorem}\label{thm2.3}
{\bf (\cite[Theorem 3]{PasRae1})}
If $A_{\infty}$ is row-finite irreducible matrix, then there exists an exact 
sequence of the abelian groups:
\begin{equation}\label{eq2.4} 
0\to K_1(\mathcal{O}_{A_{\infty}})\to \mathbf{Z}^{\infty}\buildrel 1-A_{\infty}^t\over\longrightarrow 
 \mathbf{Z}^{\infty}\buildrel i_*\over\longrightarrow  K_0(\mathcal{O}_{A_{\infty}})\to 0, 
\end{equation}
so that $K_0(\mathcal{O}_{A_{\infty}})\cong  \mathbf{Z}^{\infty}/(1-A_{\infty}^t) \mathbf{Z}^{\infty}$ and 
 $K_1(\mathcal{O}_{A_{\infty}})\cong Ker ~(1-A_{\infty}^t)$, where  $A_{\infty}^t$ is the transpose 
 of  $A_{\infty}$ and $i: \mathscr{F}\hookrightarrow \mathcal{O}_{A_{\infty}}$.
 Moreover, the Grothendieck semigroup $K_0^+(\mathscr{F})\cong \varinjlim (\mathbf{Z}^{\infty}, A_{\infty}^t)$. 
 \end{theorem}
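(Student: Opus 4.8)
The statement is attributed to Pask--Raeburn, so let me sketch the computation one would actually carry out. The plan is to reduce the $K$-theory of $\mathcal{O}_{A_{\infty}}$ to that of the AF-core $\mathscr{F}$ equipped with a single integer action, and then feed the latter into the Pimsner--Voiculescu exact sequence. First I would record the structure of the gauge action: the automorphisms $\alpha_z(s_i)=zs_i$ assemble into an action $\alpha\colon\mathbb{T}\curvearrowright\mathcal{O}_{A_{\infty}}$ whose fixed-point algebra is precisely the AF-core $\mathscr{F}$ and whose spectral subspaces are full. This fullness is what makes the crossed product $\mathcal{O}_{A_{\infty}}\rtimes_{\alpha}\mathbb{T}$ Morita equivalent to $\mathscr{F}$, and under this equivalence the dual $\widehat{\mathbb{T}}\cong\mathbf{Z}$-action is carried to the canonical shift automorphism $\sigma$ of $\mathscr{F}$. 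Combined with the Takai duality recorded in (\ref{eq2.3}), this identifies $\mathcal{O}_{A_{\infty}}\otimes\mathcal{K}$ with $\mathscr{F}\rtimes_{\sigma}\mathbf{Z}$ up to stabilization, so that $K_*(\mathcal{O}_{A_{\infty}})\cong K_*(\mathscr{F}\rtimes_{\sigma}\mathbf{Z})$.

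Next I would compute $K_*(\mathscr{F})$ directly from the AF-structure. By construction $\mathscr{F}$ is the closure of an increasing union of finite-dimensional $C^*$-algebras whose Bratteli diagram is encoded by $A_{\infty}$; Elliott's classification of AF-algebras then gives $K_1(\mathscr{F})=0$ and realizes the ordered group $K_0(\mathscr{F})$ as the inductive limit of the ordered groups $(\mathbf{Z}^{\infty},\mathbf{Z}_{+}^{\infty})$ with connecting homomorphisms given by multiplication by $A_{\infty}^t$. This yields at once the asserted description of the Grothendieck semigroup $K_0^+(\mathscr{F})\cong\varinjlim(\mathbf{Z}^{\infty},A_{\infty}^t)$ and of the underlying group $K_0(\mathscr{F})\cong\varinjlim(\mathbf{Z}^{\infty},A_{\infty}^t)$.

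With these inputs, I would write down the Pimsner--Voiculescu six-term sequence for $\mathscr{F}\rtimes_{\sigma}\mathbf{Z}$. Because $K_1(\mathscr{F})=0$ and $K_*(\mathscr{F}\rtimes_{\sigma}\mathbf{Z})\cong K_*(\mathcal{O}_{A_{\infty}})$, the six terms collapse to the four-term exact sequence
\begin{equation*}
0\to K_1(\mathcal{O}_{A_{\infty}})\to K_0(\mathscr{F})\xrightarrow{\,1-\sigma_*\,}K_0(\mathscr{F})\to K_0(\mathcal{O}_{A_{\infty}})\to 0,
\end{equation*}
where $\sigma_*$ is the automorphism of $K_0(\mathscr{F})$ induced by the shift. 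It remains to replace the inductive limit $K_0(\mathscr{F})$ by the single lattice $\mathbf{Z}^{\infty}$. For this I would use the elementary fact that, in the telescope $G=\varinjlim(\mathbf{Z}^{\infty},A_{\infty}^t)$, the connecting map $A_{\infty}^t$ acts as the identity both on $\ker(1-A_{\infty}^t)$ (where $A_{\infty}^t x=x$) and on $\operatorname{coker}(1-A_{\infty}^t)$ (since $A_{\infty}^t x\equiv x\bmod (1-A_{\infty}^t)\mathbf{Z}^{\infty}$); exactness of the direct-limit functor then gives $\ker(1-\sigma_*)\cong\ker(1-A_{\infty}^t)$ and $\operatorname{coker}(1-\sigma_*)\cong\mathbf{Z}^{\infty}/(1-A_{\infty}^t)\mathbf{Z}^{\infty}$. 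Substituting these into the four-term sequence produces exactly (\ref{eq2.4}), together with $K_0(\mathcal{O}_{A_{\infty}})\cong\mathbf{Z}^{\infty}/(1-A_{\infty}^t)\mathbf{Z}^{\infty}$ and $K_1(\mathcal{O}_{A_{\infty}})\cong\ker(1-A_{\infty}^t)$.

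The genuine difficulty is hidden in the passage to countably infinite matrices. For finite $A$ every step above is classical, being Cuntz's computation of $K_*(\mathcal{O}_A)$, but in the $GL_{\infty}(\mathbf{Z})$ setting one must check that row-finiteness makes the partial-isometry relations (\ref{eq2.2}) and the Bratteli connecting maps well-defined, that irreducibility guarantees $\mathcal{O}_{A_{\infty}}$ is simple so that the Pimsner--Voiculescu sequence genuinely applies, and that the limits and the maps $1-A_{\infty}^t\colon\mathbf{Z}^{\infty}\to\mathbf{Z}^{\infty}$ are honest algebraic direct limits rather than their completions. I expect the main obstacle to be precisely the bookkeeping that keeps the Takai/Morita identifications and the Pimsner--Voiculescu boundary map compatible with the inductive-limit description of $K_0(\mathscr{F})$ when the index set is infinite; the purely algebraic simplification of $1-\sigma_*$ is, by contrast, routine.
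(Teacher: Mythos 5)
The paper offers no proof of this statement at all---it is imported verbatim as \cite[Theorem 3]{PasRae1}---so the only benchmark is the cited source, and your sketch reconstructs exactly that argument: the gauge action with AF fixed-point core $\mathscr{F}$, Takai duality and fullness of the spectral subspaces reducing $\mathcal{O}_{A_{\infty}}\otimes\mathcal{K}$ to $\mathscr{F}\rtimes_{\sigma}\mathbf{Z}$, the Pimsner--Voiculescu sequence with $K_1(\mathscr{F})=0$, and the telescope identification of $\ker(1-\sigma_*)$ and $\operatorname{coker}(1-\sigma_*)$ with $\ker(1-A_{\infty}^t)$ and $\mathbf{Z}^{\infty}/(1-A_{\infty}^t)\mathbf{Z}^{\infty}$. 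This is correct and is essentially Cuntz's computation extended to row-finite infinite matrices, as in Pask--Raeburn; the only caveats worth flagging are that $\mathbf{Z}^{\infty}$ must be read as the direct sum $\bigoplus_{\mathbf{N}}\mathbf{Z}$ (which you implicitly acknowledge), and that simplicity of $\mathcal{O}_{A_{\infty}}$ is not actually needed for the Pimsner--Voiculescu sequence to apply.
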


\subsection{Cluster  $C^*$-algebras}
\subsubsection{Laurent phenomenon}
The cluster algebra  of rank $n$ 
is a subring  $\mathcal{A}(\mathbf{x}, B)$  of the field  of  rational functions in $n$ variables
depending  on  variables  $\mathbf{x}=(x_1,\dots, x_n)$
and a skew-symmetric matrix  $B=(b_{ij})\in M_n(\mathbf{Z})$.
The pair  $(\mathbf{x}, B)$ is called a  seed.
A new cluster $\mathbf{x}'=(x_1,\dots,x_k',\dots,  x_n)$ and a new
skew-symmetric matrix $B'=(b_{ij}')$ is obtained from 
$(\mathbf{x}, B)$ by the   exchange relations [Williams 2014]  \cite[Definition 2.22]{Wil1}:
\begin{eqnarray}
x_kx_k'  &=& \prod_{i=1}^n  x_i^{\max(b_{ik}, 0)} + \prod_{i=1}^n  x_i^{\max(-b_{ik}, 0)},\cr \nonumber
b_{ij}' &=& 
\begin{cases}
-b_{ij}  & \mbox{if}   ~i=k  ~\mbox{or}  ~j=k\cr
b_{ij}+\frac{|b_{ik}|b_{kj}+b_{ik}|b_{kj}|} {2}  & \mbox{otherwise.}
\end{cases}
\end{eqnarray}
The seed $(\mathbf{x}', B')$ is said to be a  mutation of $(\mathbf{x}, B)$ in direction $k$.
where $1\le k\le n$.  The  algebra  $\mathcal{A}(\mathbf{x}, B)$ is  generated by the 
cluster  variables $\{x_i\}_{i=1}^{\infty}$
obtained from the initial seed $(\mathbf{x}, B)$ by the iteration of mutations  in all possible
directions $k$.   The  Laurent phenomenon
 says  that  $\mathcal{A}(\mathbf{x}, B)\subset \mathbf{Z}[\mathbf{x}^{\pm 1}]$,
where  $\mathbf{Z}[\mathbf{x}^{\pm 1}]$ is the ring of  the Laurent polynomials in  variables $\mathbf{x}=(x_1,\dots,x_n)$
 [Williams 2014]  \cite[Theorem 2.27]{Wil1}.
 The cluster algebra  $\mathcal{A}(\mathbf{x}, B)$  has the structure of an additive abelian
semigroup consisting of the Laurent polynomials with positive coefficients. 
In other words,  the $\mathcal{A}(\mathbf{x}, B)$ is a dimension group
[Blackadar  1986] \cite[Section 7.3]{Bla}. 
The cluster $C^*$-algebra  $\mathbb{A}(\mathbf{x}, B)$  is   an  AF-algebra,  
such that $K_0(\mathbb{A}(\mathbf{x}, B))\cong  \mathcal{A}(\mathbf{x}, B)$.

\subsubsection{Cluster $C^*$-algebra $\mathbb{A}(S_{g,n})$}
Denote by $S_{g,n}$  the Riemann surface   of genus $g\ge 0$  with  $n\ge 0$ cusps.
 Let   $\mathcal{A}(\mathbf{x},  S_{g,n})$ be the cluster algebra 
 coming from  a triangulation of the surface $S_{g,n}$   [Williams 2014]  \cite[Section 3.3]{Wil1}. 
 We shall denote by  $\mathbb{A}(S_{g,n})$  the corresponding cluster $C^*$-algebra. 
 Let $T_{g,n}$ be the Teichm\"uller space of the surface $S_{g,n}$,
i.e. the set of all complex structures on $S_{g,n}$ endowed with the 
natural topology. The geodesic flow $T^t: T_{g,n}\to T_{g,n}$
is a one-parameter  group of matrices $diag ~\{ e^t, e^{-t}\}$
acting on the holomorphic quadratic differentials on the Riemann surface $S_{g,n}$. 
Such a flow gives rise to a one parameter group of automorphisms 
$\sigma_t: \mathbb{A}(S_{g,n})\to \mathbb{A}(S_{g,n})$
called the Tomita-Takesaki flow on the AF-algebra $\mathbb{A}(S_{g,n})$. 
Denote by $Prim~\mathbb{A}(S_{g,n})$ the space of all primitive ideals 
of $\mathbb{A}(S_{g,n})$ endowed with the Jacobson topology. 
Recall (\cite{Nik2}) that each primitive ideal has a parametrization by a vector 
$\Theta\in \mathbf{R}^{6g-7+2n}$ and we write it 
$I_{\Theta}\in Prim~\mathbb{A}(S_{g,n})$
\begin{theorem}\label{thm2.4}
{\bf (\cite{Nik2})}
There exists a homeomorphism
$h:  Prim~\mathbb{A}(S_{g,n})\times \mathbf{R}\to \{U\subseteq  T_{g,n} ~|~U~\hbox{{\sf is generic}}\},$
where $h(I_{\Theta},t)=S_{g,n}$ is 
given by the formula $\sigma_t(I_{\Theta})\mapsto S_{g,n}$;  the set $U=T_{g,n}$ if and only if
$g=n=1$.   The $\sigma_t(I_{\Theta})$
is an ideal of  $\mathbb{A}(S_{g,n})$ for all $t\in \mathbf{R}$ and 
 the quotient  algebra $AF$-algebra  $\mathbb{A}(S_{g,n})/\sigma_t(I_{\Theta}):=\mathbb{A}_{\Theta}^{6g-6+2n}$
is  a non-commutative coordinate ring  of  the Riemann surface  $S_{g,n}$.  
\end{theorem}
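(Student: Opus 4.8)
The plan is to construct an explicit dictionary between the representation theory of the AF-algebra $\mathbb{A}(S_{g,n})$ and the complex-analytic geometry of $S_{g,n}$, and to check that it intertwines the Jacobson topology on one side with the Teichm\"uller topology on the other. The organizing principle is a dimension count: a point of $T_{g,n}$ has $6g-6+2n$ real moduli, the primitive ideal $I_{\Theta}$ records the $6g-7+2n$ coordinates of $\Theta$, and the flow variable $t$ supplies the one missing real coordinate, so that the source $Prim~\mathbb{A}(S_{g,n})\times\mathbf{R}$ and the generic complex structures live in spaces of equal dimension. I would set up $h$ as the composite of an algebraic parametrization of $Prim~\mathbb{A}(S_{g,n})$ and a geometric realization map, and then verify it is a homeomorphism onto its image.

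First I would analyze $Prim~\mathbb{A}(S_{g,n})$ through the dimension group $K_0(\mathbb{A}(S_{g,n}))\cong\mathcal{A}(\mathbf{x},S_{g,n})$. Since $\mathbb{A}(S_{g,n})$ is AF, its closed two-sided ideals correspond bijectively to the order ideals of this dimension group \cite[Section 7.3]{Bla}, and the primitive ones to the prime order ideals; passing to the Bratteli diagram of the triangulated surface, the non-trivial primitive ideals are read off as the ``slopes'' of infinite admissible paths. The Laurent phenomenon and the positivity of the cone of $\mathcal{A}(\mathbf{x},S_{g,n})$ \cite[Section 3.3]{Wil1} are exactly what pin down which slopes occur, and a slope is naturally a transverse-measure vector $\Theta\in\mathbf{R}^{6g-7+2n}$ of a measured foliation on $S_{g,n}$ obtained from the triangulation. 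This is the step where the combinatorics of the seed mutations is converted into a projective measured foliation, i.e. a point of Thurston's space $\mathcal{PMF}(S_{g,n})$, which has precisely dimension $6g-7+2n$.

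Second, on the geometric side I would use the classical parametrization of Teichm\"uller space by measured foliations together with the Hubbard-Masur theorem: a uniquely ergodic foliation $\mathcal{F}$ with projective class $\Theta$ determines a Teichm\"uller geodesic whose position is recorded by $t$, and the assignment $\sigma_t(I_{\Theta})\mapsto S_{g,n}$ defines the candidate map $h$. Because the flow $diag~\{e^t,e^{-t}\}$ on holomorphic quadratic differentials is the Teichm\"uller geodesic flow, it is intertwined with the Tomita-Takesaki flow $\sigma_t$ on $\mathbb{A}(S_{g,n})$, so continuity of $h$ in both variables follows from continuity of the foliation parametrization. Bijectivity onto the generic locus $U$ and continuity of the inverse would be obtained by matching unique ergodicity (equivalently minimality) of $\mathcal{F}$ with maximality of the ideal $I_{\Theta}$: the non-generic points are exactly those whose associated foliation fails to be uniquely ergodic or carries a saddle connection. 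Such foliations exist precisely when $(g,n)\neq(1,1)$, whereas on the once-punctured torus every foliation of irrational slope is automatically uniquely ergodic; this is why $U=T_{g,n}$ if and only if $g=n=1$.

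The remaining assertions are comparatively soft. That $\sigma_t(I_{\Theta})$ is an ideal for every $t$ is immediate, since each $\sigma_t$ is an automorphism of $\mathbb{A}(S_{g,n})$ and automorphisms carry closed two-sided ideals to closed two-sided ideals. The quotient $\mathbb{A}(S_{g,n})/\sigma_t(I_{\Theta})=\mathbb{A}_{\Theta}^{6g-6+2n}$ is then again an AF-algebra whose $K_0$ is the order-quotient dimension group, and identifying it as a \emph{non-commutative coordinate ring} of $S_{g,n}$ amounts to recording that its isomorphism class is a complete invariant of the complex structure $h(I_{\Theta},t)$. The main obstacle I anticipate is the homeomorphism claim itself: matching the Jacobson topology on $Prim~\mathbb{A}(S_{g,n})$ with the Teichm\"uller topology requires careful control of how the non-closed points of the primitive spectrum correspond to the non-uniquely-ergodic foliations on Thurston's boundary, and it is precisely there that the genericity hypothesis and the exceptional role of $g=n=1$ must be handled with care.
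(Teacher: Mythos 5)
The first thing to say is that the paper contains no proof of Theorem \ref{thm2.4} to compare your attempt against: the result is imported verbatim from the reference \cite{Nik2} (``On cluster $C^*$-algebras''), and within the present source it functions purely as a quoted black box used later in Section 3. So your proposal cannot be checked against ``the paper's own argument''; it can only be assessed on its own terms as a reconstruction of what the cited reference must do. As such a reconstruction it is a sensible outline --- reading primitive ideals of the AF-algebra off the Bratteli diagram as slopes/transverse measures, identifying these with points of $\mathcal{PMF}(S_{g,n})$ of dimension $6g-7+2n$, using the extra real parameter $t$ to recover the $6g-6+2n$ moduli of a complex structure, and explaining genericity via unique ergodicity (with the once-punctured torus exceptional) --- and this is broadly consistent with the circle of ideas in \cite{Nik2}, \cite{Boc1} and \cite{Mun1}.

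However, as a proof it has genuine gaps, and you have correctly located the most serious one yourself without closing it. The Jacobson topology on $Prim$ of a non-simple AF-algebra is in general badly non-Hausdorff, while any subset of $T_{g,n}$ is Hausdorff; asserting that the parametrization $I_{\Theta}\leftrightarrow\Theta$ is a homeomorphism therefore requires an argument that the relevant ideals form a Hausdorff (indeed locally Euclidean) subspace of the primitive spectrum, and ``careful control of the non-closed points'' is the entire content of the theorem, not an afterthought. Two further steps are asserted rather than proved: (a) that the Tomita--Takesaki flow $\sigma_t$ induced by the modular theory of $\mathbb{A}(S_{g,n})$ is actually intertwined with the Teichm\"uller geodesic flow $diag\,\{e^t,e^{-t}\}$ --- this is a substantive identification, not a formal consequence of both being one-parameter flows; and (b) that the isomorphism class of the quotient $\mathbb{A}_{\Theta}^{6g-6+2n}$ is a complete invariant of the complex structure, which is what the phrase ``non-commutative coordinate ring'' is supposed to mean and which your sketch merely ``records''. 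Until those three points are supplied, what you have is a plausible roadmap rather than a proof.
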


\section{Proofs}
\subsection{Proof of theorem \ref{thm1.3} }
For the sake of clarity, let us outline the main ideas. 
We start with a preparatory Lemma \ref{lm3.2} describing matrix 
 matrix $A_{\infty}\in GL_{\infty}(\mathbf{Z})$
in terms of the Jacobi-Perron continued fraction of vector $\Theta\in\mathbf{R}^{6g-7+2n}$.
Next in Lemma \ref{lm3.4} it is proved
that the abelian group $K_0(\mathcal{O}_{A_{\infty}})$
is infinite torsion, if and only if, the matrix $A_{\infty}\in GL_{\infty}(\mathbf{Z})$ is block-periodic. 
 We show in Lemma \ref{lm3.5}
 that the formulas 
$\mathbb{A}_{\Theta}\cong\mathbb{A}(S_{g,n}) / I_{\Theta}$
and  $\mathcal{O}_{A_{\infty}}\otimes\mathcal{K}\cong I_{\Theta}\rtimes_{\hat\alpha}\mathbb{T}$
imply an injective map $i:  K_0(\mathcal{O}_{A_{\infty}})\to K_0(\mathbb{A}_{\Theta})$.  
Finally, it is proved in Lemma \ref{lm3.7} that the  map $i$ 
coincides with the $m$-dimensional Minkowski question-mark function, where  $m=6g-7+2n$.  
Let us pass to a detailed argument.

We start with a preparatory lemma having an independent interest.  
Let $\Theta:=(\theta_1,\dots,\theta_m)\in \mathbf{R}^m/\mathbf{Z}^m$ and consider
the  corresponding Jacobi-Perron continued fraction [Bernstein 1971]  \cite{B}:
\begin{equation}\label{eq3.1}
\left(
\begin{matrix}
1\cr \Theta
\end{matrix}
\right)=
\lim_{k\to\infty} \left(
\begin{matrix} 
0 & 1\cr I & a_1
\end{matrix}
\right)\dots
\left(
\begin{matrix} 
0 & 1\cr I & a_k
\end{matrix}
\right)
\left(
\begin{matrix} 
0\cr \mathbb{I}
\end{matrix}
\right),
\end{equation}
where $a_i=(a^{(i)}_1,\dots, a^{(i)}_{n-1})^T$ is a vector of the non-negative integers,  
$I$ the unit matrix and $\mathbb{I}=(0,\dots, 0, 1)^T$.
Consider an infinite-dimensional matrix  $B_{\Theta}:=diag ~\{B_1, B_2, \dots\}$, where 
\begin{equation}\label{eq3.2}
B_i=
\begin{cases}
(\underbrace{1,\dots,1}_s, \underbrace{\{0,1\},\dots, \{0,1\}}_s)^T, ~\hbox{if} ~|B_i|=2s ,\cr
(\underbrace{1,\dots,1}_{s+1}, \underbrace{\{0,1\},\dots, \{0,1\}}_s)^T, ~\hbox{if} ~|B_i|=2s+1.
\end{cases}
\end{equation}
Following the pattern of formula (\ref{eq1.1}), 
the $k$-th entry $\{0,1\}$ of the blocks 
\linebreak
$B_j, \dots, B_{j+a_k^{(i)}}$
takes value $0$ and alternates  to the value $1$ for the blocks 
\linebreak
$B_{j+a_k^{(i)}+1}, \dots, B_{j+a_k^{(i)}+a_k^{(i+1)}}$. 
\begin{example}\label{ex3.1}
Assume that $S_{g,n}$ is a Riemann surface of genus $g=1$ with  $n=1$  cusps.  
The length of block $B_i$ in formula (\ref{eq3.2}) is equal to $|B_i|=2g+n=3$ in this 
case. Since the length is an odd number with $s=1$, one gets  in view of (\ref{eq3.2})  
the block $B_i=(1,1, \{0,1\})^T$.  On the other hand, since $m=6g-7+2n=1$, the Jacobi-Perron fraction in formula (\ref{eq3.1})
becomes a regular continued fraction  $[0; a_1, a_2,\dots]$ of a real number $\theta :=\theta_1$. By way of example, 
let us consider the  case $\theta=\frac{3-\sqrt{5}}{2}$.  It is well known,  that the continued fraction in this case 
must be infinite and  periodic of the form   $\theta=[0, \overline{2}]$, where $\overline{2}$ is the minimal period. 
In view of (\ref{eq1.1}),  one gets  an infinite matrix $B_{\Theta}$  given by the formula:
\begin{equation}
B_{\Theta}= diag
~\left\{ 
\left(
\begin{matrix}
1\cr 1\cr 0
\end{matrix}
\right)
\overline{
\left(
\begin{matrix}
1\cr 1\cr 1
\end{matrix}
\right)
\left(
\begin{matrix}
1\cr 1\cr 1
\end{matrix}
\right)
\left(
\begin{matrix}
1\cr 1\cr 0
\end{matrix}
\right)
\left(
\begin{matrix}
1\cr 1\cr 0
\end{matrix}
\right)
}
\right\},
\end{equation}
where bar indicates the minimal period of $B_{\Theta}$. 
\end{example}
\begin{lemma}\label{lm3.2}
 $\mathcal{O}_{B_{\Theta}}\otimes\mathcal{K}\cong I_{\Theta}\rtimes_{\hat\alpha}\mathbb{T}$,
 i.e.  $\mathcal{O}_{B_{\Theta}}$ is a Cuntz-Pimsner algebra, such that $I_{\Theta}$ is the AF-core 
 of  $\mathcal{O}_{B_{\Theta}}$. 
 \end{lemma}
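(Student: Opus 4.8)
The plan is to realize $I_{\Theta}$ as the AF-core of $\mathcal{O}_{B_{\Theta}}$ and then let the Takai duality (\ref{eq2.3}) deliver the crossed-product formula automatically. By the construction of the AF-core in \cite[Definition 2.2.1]{PasRae1} and by Theorem \ref{thm2.3}, the AF-core $\mathscr{F}\subset\mathcal{O}_{B_{\Theta}}$ is the AF-algebra whose Bratteli diagram is read off from the matrix $B_{\Theta}$, so that $K_0^+(\mathscr{F})\cong\varinjlim(\mathbf{Z}^{\infty}, B_{\Theta}^t)$. On the other side, by \cite{Nik2} the primitive ideal $I_{\Theta}$ is itself an AF-algebra whose dimension group is the one assembled from the Jacobi-Perron continued fraction (\ref{eq3.1}) of $\Theta$. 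Before comparing them I would check that $\mathcal{O}_{B_{\Theta}}$ is well-defined: by (\ref{eq3.2}) each block $B_i$ is a finite column with entries in $\{0,1\}$, so $B_{\Theta}$ is row-finite, and for a generic $\Theta$ the Jacobi-Perron data render $B_{\Theta}$ irreducible in the sense of Section 2.2; hence \cite[Theorem 1]{PasRae1} and the exact sequence (\ref{eq2.4}) apply.

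The heart of the proof is to identify the two dimension groups, after which Elliott's classification of AF-algebras by their ordered $K_0$ forces $\mathscr{F}\cong I_{\Theta}$. This reduces to a combinatorial comparison of the two inductive limits. I would telescope the blocks of $B_{\Theta}$ that belong to a single Jacobi-Perron step and verify that the resulting incidence matrix reproduces the factor $\left(\begin{smallmatrix} 0 & 1\\ I & a_k\end{smallmatrix}\right)$ of (\ref{eq3.1}), up to transposition. The definition (\ref{eq3.2}) is engineered for exactly this: the alternation of the entries $\{0,1\}$ among consecutive blocks follows the Minkowski pattern (\ref{eq1.1}), so that a run of $a_k^{(i)}$ blocks carries the value $0$ and the next run carries the value $1$, mirroring the runs of binary digits in (\ref{eq1.1}). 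Level by level this recovers the partial quotients $a_k^{(i)}$ across all $m=6g-7+2n$ coordinates, so that $\varinjlim(\mathbf{Z}^{\infty}, B_{\Theta}^t)$ becomes order-isomorphic to the Jacobi-Perron dimension group attached to $I_{\Theta}$.

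With the AF-cores identified, the conclusion is immediate: the Takai duality (\ref{eq2.3}) gives $\mathcal{O}_{B_{\Theta}}\otimes\mathcal{K}\cong\mathscr{F}\rtimes_{\hat\alpha}\mathbb{T}\cong I_{\Theta}\rtimes_{\hat\alpha}\mathbb{T}$, and $I_{\Theta}$ is by construction the AF-core of $\mathcal{O}_{B_{\Theta}}$, which is precisely the assertion of the lemma. I expect the main obstacle to be the bookkeeping in the telescoping step: one must track the block length $|B_i|=2g+n$ together with the alternation rule so that the Minkowski runs of (\ref{eq1.1}) line up with the Jacobi-Perron partial quotients simultaneously in every coordinate, and so that it is the ordered structure, i.e. the positive cone $K_0^+$, that is preserved and not merely the underlying abelian group.
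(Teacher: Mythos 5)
Your proposal is correct and follows essentially the same route as the paper: identify $I_{\Theta}$ with the AF-core of $\mathcal{O}_{B_{\Theta}}$ by matching the block pattern (\ref{eq3.2}) against the continued-fraction expansion of $\Theta$ via the Minkowski pattern (\ref{eq1.1}), check that $B_{\Theta}$ is row-finite and irreducible so that \cite[Theorem 1]{PasRae1} applies, and then let the Takai duality (\ref{eq2.3}) produce the crossed-product isomorphism. The only cosmetic difference is that you carry out the identification at the level of ordered $K_0$-groups and invoke Elliott's classification, whereas the paper works directly with Bratteli diagrams, reading the edge-deletion pattern of the subdiagram for $I_{\theta}$ off the $L_a$, $R_a$ block decomposition of [Boca 2008] \cite{Boc1} in the base case $g=n=1$ before passing to the general Jacobi-Perron setting.
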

\begin{proof}
(i) For the sake of clarity, consider the  case $g=n=1$. 
One gets  $m=6g-7+2n=1$ and $\Theta=(1,\theta)$. 
It is known that the cluster $C^*$-algebra $\mathbb{A}(S_{1,1})$
is given by the Bratteli diagram corresponding to the incidence matrix:
\begin{equation}\label{eq3.3}
diag
\left\{\left(
\begin{matrix}
1\cr 1\cr 1
\end{matrix}
\right)
\left(
\begin{matrix}
1\cr 1\cr 1
\end{matrix}
\right)
\dots
\right\},
\end{equation}
see [Mundici 1988] \cite[Fig. 1]{Mun1}, [Boca 2008] \cite[Fig. 2]{Boc1} and \cite[Fig. 3]{Nik2}. 

\bigskip
(ii) Let $I_{\theta}\subset \mathbb{A}(S_{1,1})$ be a primitive two-sided ideal.  
Such an ideal is given by a subgraph of the Bratteli diagram of   $\mathbb{A}(S_{1,1})$;
we refer the reader to  [Boca 2008] \cite[Fig. 7]{Boc1} for the corresponding picture. 
Clearly,  such a subgraph is obtained by cancellation of certain edges of the  Bratteli diagram for   $\mathbb{A}(S_{1,1})$.
In other words,  replacing $1$'s  by $0$'s  in the matrix (\ref{eq3.3}) gives us a matrix of incidences for the
Bratteli diagram of the ideal $I_{\theta}$. 

\bigskip
(iii) To determine modifiable  entries of (\ref{eq3.3})  
\footnote{Such entries are denoted by symbol $\{0,1\}$ in the formula (\ref{eq3.2}).}
, let $[0; a_1,a_2,\dots]$ be the regular continued fraction of $\theta\in [0,1]$. 
It follows from [Boca 2008] \cite[Fig. 7]{Boc1} that  the pattern of 
cancellation for
the edges of Bratteli diagram of $\mathbb{A}(S_{1,1})$ coincides with 
the pattern (\ref{eq1.1}) of the Minkowski question-mark function, see also 
 [Boca 2008] \cite[Remark 1]{Boc1}. 
 Indeed, the omitted edges consist of the elementary blocks $L_a$ and $R_a$ 
 shown  in  [Boca 2008] \cite[Fig. 5]{Boc1}. 
 The continued fraction  of $\theta$ gives rise to the sequence of blocks
  [Boca 2008] \cite[p. 980 at the bottom]{Boc1}:
\begin{equation}\label{eq3.4}
L_{a_1-1}\circ R_{a_2}\circ L_{a_3}\circ R_{a_2}\circ\dots
\end{equation}
It remains to compare (\ref{eq3.4}) and  the RHS of  (\ref{eq1.1})  to conclude that 
$B_{\theta}$  is given by formulas (\ref{eq3.2}), where  $s=1$ and 
$|B_i|=3$.

\bigskip
(iv)  It is verified directly  that the matrix  $B_{\theta}$ is row-finite and irreducible.
 Thus there exists a  Cuntz-Pimsner algebra $\mathcal{O}_{B_{\Theta}}$ 
satisfying the isomorphism  $\mathcal{O}_{B_{\theta}}\otimes\mathcal{K}\cong I_{\theta}\rtimes_{\hat\alpha}\mathbb{T}$. 
 This argument finishes  proof of the case $g=n=1$.  

\bigskip
(v) The general case of surface $S_{g,n}$ is treated likewise by applying  the Jacobi-Perron continued 
fractions of vector $\Theta:=(1, \theta_1,\dots,\theta_{6g-7+2n})$ [Bernstein 1971]  \cite{B}. 
The verification of formulas (\ref{eq3.2}) is direct and  left to the reader.  

\bigskip
Lemma \ref{lm3.2} is proved. 
\end{proof}

\begin{definition}\label{dfn3.3}
The matrix $A_{\infty}=diag ~\{B_1, B_2, \dots\}$ is called block-periodic,  if $B_i=B=Const$.  
\end{definition}
\begin{lemma}\label{lm3.4}
 $K_0(\mathcal{O}_{A_{\infty}})$
is an infinite torsion  group, whenever  $A_{\infty}\in GL_{\infty}(\mathbf{Z})$ is a block-periodic matrix. 
\end{lemma}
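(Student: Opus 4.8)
The plan is to compute $K_0(\mathcal{O}_{A_\infty})$ directly from the Pask--Raeburn exact sequence of Theorem \ref{thm2.3}, which identifies $K_0(\mathcal{O}_{A_\infty}) \cong \mathbf{Z}^\infty/(1 - A_\infty^t)\mathbf{Z}^\infty$, and then to exploit block-periodicity to evaluate this cokernel. First I would record that, because $A_\infty = \mathrm{diag}\{B, B, \dots\}$ repeats a single finite block $B$ (Definition \ref{dfn3.3}), the endomorphism $1 - A_\infty^t$ of $\mathbf{Z}^\infty$ is block-diagonal with all diagonal blocks equal to one fixed integer matrix $M := 1 - B^t$ acting on $\mathbf{Z}^r$, where $r$ is the size of one period. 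Consequently the cokernel splits as the countable direct sum $\bigoplus_{k=1}^\infty \operatorname{coker} M$ of copies of the single abelian group $G := \mathbf{Z}^r/M\mathbf{Z}^r$.

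Granting this reduction, the statement follows from two facts about $G$. If $M$ is injective with finite cokernel, then each summand is finite, so the direct sum is a torsion group; and a direct sum of countably many copies of a nonzero finite group is infinite. Thus it remains to prove (a) that $G$ is finite, i.e. $\det M \neq 0$, and (b) that $G$ is nontrivial, i.e. $|\det M| > 1$; together these yield the assertion that $K_0(\mathcal{O}_{A_\infty})$ is infinite torsion.

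I expect (a) and (b) to be the main obstacle, and this is precisely the point where block-periodicity is used rather than mere row-finiteness (cf. Remark \ref{rmk1.2}). The repeating block $B$ is assembled, via (\ref{eq3.2}) and Lemma \ref{lm3.2}, from one period of the Jacobi--Perron expansion of $\Theta$, so the corresponding period matrix lies in $GL_r(\mathbf{Z})$ and is hyperbolic: its eigenvalues are a fundamental unit of the associated real quadratic irrationality together with its algebraic conjugates, none of which equals $1$. Hence $1$ is not an eigenvalue, giving $\det M = \det(1 - B^t) = \det(1 - B) \neq 0$ and the finiteness in (a), while a trace/determinant estimate on the period matrix yields $|\det M| > 1$ for (b). The remaining bookkeeping — that the block decomposition of $1 - A_\infty^t$ is exact and that transposition leaves the determinant unchanged — is routine, and I would dispatch it quickly.
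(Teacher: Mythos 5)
Your proposal follows essentially the same route as the paper: both start from the Pask--Raeburn formula $K_0(\mathcal{O}_{A_{\infty}})\cong \mathbf{Z}^{\infty}/(1-A_{\infty}^t)\mathbf{Z}^{\infty}$ of Theorem \ref{thm2.3}, use block-periodicity to reduce to the single repeating block $B$, and conclude from the nonvanishing of $\det(1-B^t)$. Two small differences are worth noting. First, you realize the cokernel as a countable direct sum of copies of $\mathbf{Z}^r/(1-B^t)\mathbf{Z}^r$, whereas the paper writes it as an inductive limit of the finite quotients $\mathbf{Z}^{k}/(1-\mathrm{diag}\{B^t,\dots,B^t\})\mathbf{Z}^{k}$ coming from the AF-core; your version is the cleaner bookkeeping and, unlike the paper, you explicitly flag that one also needs the single-block cokernel to be \emph{nonzero} for the group to be infinite. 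Second, where the paper simply asserts $\det(1-\mathrm{diag}\{B^t,\dots,B^t\})\neq 0$, you propose to derive it from hyperbolicity of the Jacobi--Perron period matrix and its fundamental-unit eigenvalues; be careful here, since the block $B$ of Definition \ref{dfn3.3} is the $0$--$1$ incidence block of formula (\ref{eq3.2}) (in the paper's notation a column of length $2s$ or $2s+1$), not the $GL_r(\mathbf{Z})$ period matrix of the continued fraction, so the eigenvalue argument does not apply to $B$ as written and would need to be rerouted through the actual matrix whose cokernel is being computed. That determinant claim is the one point at which both your argument and the paper's are thin.
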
 
\begin{proof}
(i)  Using Theorem \ref{thm2.3},  one gets:
\begin{equation}\label{eq3.5}
K_0(\mathcal{O}_{A_{\infty}})\cong  \frac{\mathbf{Z}^{\infty}}{(1-A_{\infty}^t) \mathbf{Z}^{\infty}}\cong
  \frac{\mathbf{Z}^{\infty}}{(1-  diag ~\{B_1^t, B_2^t, \dots\}) \mathbf{Z}^{\infty}}. 
\end{equation}

\bigskip
(ii)  Whenever $A_{\infty}$ is a block-periodic matrix, we have
$B_i=B$, where  $B=Const$ is the minimal period of  $A_{\infty}$. 
The  AF-core $\mathscr{F}
\cong\varinjlim\cup_{k,j} \cup_{i\in V_k^j} \mathscr{F}_k^j(i)\subset \mathcal{O}_{A_{\infty}}$
[Pask \& Raeburn 1996] \cite[Definition 2.2.1]{PasRae1} gives rise to the inductive limit:
\begin{equation}\label{eq3.6}
K_0(\mathcal{O}_{A_{\infty}})\cong \varinjlim
 \frac{\mathbf{Z}^{k}}{(1-  diag ~\{B^t,\dots, B^t\}) \mathbf{Z}^{k}},
\end{equation}
where $k$ is the rank of matrix $diag ~\{B^t,\dots, B^t\}$. 
Since $\det ~(1-  diag ~\{B^t,\dots, B^t\})\ne 0$, 
one concludes that  the inductive limit 
(\ref{eq3.6})  is isomorphic to an infinite torsion  group.

Lemma \ref{lm3.4} is proved. 
\end{proof}

\begin{lemma}\label{lm3.5}
There exists  an injective map 
$i:  K_0(\mathcal{O}_{A_{\infty}})\to K_0(\mathbb{A}_{\Theta})$
between the sets
\footnote{See  Remark \ref{rmk1.2}.} 
$K_0(\mathbb{A}_{\Theta})$ and $K_0(\mathcal{O}_{A_{\infty}})$.
\end{lemma}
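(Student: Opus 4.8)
The plan is to construct the map $i$ by combining the two structural isomorphisms that define the algebras in play and then tracking what they do on $K_0$. First I would invoke Lemma \ref{lm3.2}, which identifies the ideal $I_{\Theta}$ as the AF-core $\mathscr{F}$ of the Cuntz-Pimsner algebra $\mathcal{O}_{A_{\infty}}$ via the Takai duality isomorphism $\mathcal{O}_{A_{\infty}}\otimes\mathcal{K}\cong I_{\Theta}\rtimes_{\hat\alpha}\mathbb{T}$. On the other hand, the quotient construction gives $\mathbb{A}_{\Theta}\cong\mathbb{A}(S_{g,n})/I_{\Theta}$ from Theorem \ref{thm2.4}. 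The key point is that $I_{\Theta}=\mathscr{F}$ is precisely the AF-core, so the inclusion $i:\mathscr{F}\hookrightarrow\mathcal{O}_{A_{\infty}}$ appearing in Theorem \ref{thm2.3} furnishes the natural candidate for the induced map on $K_0$.

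The main steps, in order, are as follows. I would first apply Theorem \ref{thm2.3} to read off the two $K$-theory descriptions: $K_0(\mathcal{O}_{A_{\infty}})\cong\mathbf{Z}^{\infty}/(1-A_{\infty}^t)\mathbf{Z}^{\infty}$, together with the identification of the Grothendieck semigroup $K_0^{+}(\mathscr{F})\cong\varinjlim(\mathbf{Z}^{\infty},A_{\infty}^t)$. Next I would use the identification $I_{\Theta}=\mathscr{F}$ from Lemma \ref{lm3.2} to rewrite $K_0(\mathbb{A}_{\Theta})$ in terms of the same inductive system, noting (as in Remark \ref{rmk1.2+}) that $K_0(\mathbb{A}_{\Theta})\cong\mathbf{Z}^{6g-6+2n}$ is the dimension group of the AF-algebra carrying the order structure $K_0^{+}(\mathscr{F})$. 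The map $i_*$ on $K_0$ induced by the inclusion $i:\mathscr{F}\hookrightarrow\mathcal{O}_{A_{\infty}}$ sits inside the six-term exact sequence \eqref{eq2.4}, appearing there as the connecting homomorphism whose image is the torsion quotient $K_0(\mathcal{O}_{A_{\infty}})$.

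I would then establish injectivity. The natural source of a map into $K_0(\mathbb{A}_{\Theta})$ is the comparison of the two quotients of the same $\mathbf{Z}^{\infty}$: the torsion group $K_0(\mathcal{O}_{A_{\infty}})=\mathbf{Z}^{\infty}/(1-A_{\infty}^t)\mathbf{Z}^{\infty}$ on one side, and the dimension group $K_0(\mathbb{A}_{\Theta})=\varinjlim(\mathbf{Z}^{\infty},A_{\infty}^t)$ on the other. Since both are computed from the single block-periodic matrix $A_{\infty}$ via Lemma \ref{lm3.4}, each class in the torsion quotient lifts to a well-defined element of the inductive limit, and I would verify that distinct torsion classes yield distinct limit classes because the block-periodicity forces the transition maps $A_{\infty}^t$ to respect the period-block decomposition faithfully. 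Concretely, injectivity amounts to showing that the kernel of the composite $\mathbf{Z}^{\infty}\to K_0(\mathcal{O}_{A_{\infty}})\to K_0(\mathbb{A}_{\Theta})$ equals $(1-A_{\infty}^t)\mathbf{Z}^{\infty}$, i.e.\ no extra collapsing occurs in passing to the inductive limit.

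The hard part will be the last step: controlling the kernel of the induced map and ruling out additional identifications in the inductive limit. Because $K_0(\mathcal{O}_{A_{\infty}})$ is infinite torsion (Remark \ref{rmk1.2}) while $K_0(\mathbb{A}_{\Theta})$ is torsion-free, the map $i$ cannot be a group homomorphism, so injectivity must be understood at the level of the underlying sets rather than groups. I expect the delicate argument is to show that the pointwise (set-theoretic) assignment sending a torsion class to its image under the natural map into the dimension group is injective, which requires carefully tracking the block structure from \eqref{eq3.2} through the inductive limit \eqref{eq3.6}. This is where the block-periodicity of $A_{\infty}$ and the nonvanishing determinant $\det(1-\mathrm{diag}\{B^t,\dots,B^t\})\ne 0$ established in Lemma \ref{lm3.4} do the essential work, guaranteeing that the period blocks inject into successive stages of the limit without merging.
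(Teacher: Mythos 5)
Your construction diverges from the paper's in a way that leaves a real gap. The paper builds the map $i$ from \emph{two} exact sequences sharing the corner $K_0(I_{\Theta})$: the Pimsner--Voiculescu sequence for the crossed product $I_{\Theta}\rtimes_{\hat\alpha}\mathbb{T}$, which gives $K_0(I_{\Theta})\buildrel 1-\hat\alpha_*\over\longrightarrow K_0(I_{\Theta})\buildrel p\over\longrightarrow K_0(\mathcal{O}_{A_{\infty}})$, and the $K$-theory sequence of the extension $0\to I_{\Theta}\to\mathbb{A}(S_{g,n})\to\mathbb{A}_{\Theta}\to 0$, which gives $K_0(I_{\Theta})\buildrel i\over\longrightarrow K_0(\mathbb{A}(S_{g,n}))\buildrel\pi\over\longrightarrow K_0(\mathbb{A}_{\Theta})$. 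These are assembled into the commutative diagram of Figure~1, and $i$ is obtained by closing that diagram. Your proposal never introduces $K_0(\mathbb{A}(S_{g,n}))$ or the quotient map $\pi$ at all; instead you identify the target $K_0(\mathbb{A}_{\Theta})$ with the dimension group $\varinjlim(\mathbf{Z}^{\infty},A_{\infty}^t)$ of the AF-core. That is a conflation of two different algebras: $\varinjlim(\mathbf{Z}^{\infty},A_{\infty}^t)$ computes $K_0(\mathscr{F})=K_0(I_{\Theta})$, the $K$-theory of the \emph{ideal}, whereas $\mathbb{A}_{\Theta}=\mathbb{A}(S_{g,n})/I_{\Theta}$ is the \emph{quotient}, and the only bridge between $K_0(I_{\Theta})$ and $K_0(\mathbb{A}_{\Theta})$ is precisely the extension sequence you omit. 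Without that bridge your composite $\mathbf{Z}^{\infty}\to K_0(\mathcal{O}_{A_{\infty}})\to K_0(\mathbb{A}_{\Theta})$ has no defined second arrow.

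The second problem is that your injectivity argument leans entirely on block-periodicity of $A_{\infty}$ and on $\det(1-\mathrm{diag}\{B^t,\dots,B^t\})\ne 0$ from Lemma~\ref{lm3.4}. But by Lemma~\ref{lm3.2} and the case analysis in Lemma~\ref{lm3.7}, $A_{\infty}$ is block-periodic only when the Jacobi--Perron expansion of $\Theta$ is (eventually) periodic, i.e.\ in the algebraic case; for rational tuples the matrix degenerates to a finite constant block (the Cuntz-algebra case), and for generic irrational tuples it is explicitly \emph{not} block-periodic. So even granting your identification of the target, the argument would establish injectivity only on a proper subset of the domain, not for the map $i$ as asserted in the lemma. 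You are right that $i$ must be understood set-theoretically rather than as a group homomorphism (the source is infinite torsion, the target torsion-free), but the mechanism that makes the set map well defined and injective in the paper is the diagram chase through $K_0(\mathbb{A}(S_{g,n}))$, not a kernel computation inside a single inductive system.
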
 
\begin{proof}
(i) 
Consider the Pimsner-Voiculescu exact sequence for the crossed product $C^*$-algebra   
 $I_{\Theta}\rtimes_{\hat\alpha}\mathbb{T}$ [Blackadar 1986] \cite[Section 10.6]{Bla}.
 Adopting the notation of   [Pask \& Raeburn 1996] \cite[Figure 1]{PasRae1},
 one gets the following exact sequence of the $K_0$-groups: 
 \begin{equation}\label{eq3.7}
 K_0(I_{\Theta})
 \buildrel  1-\hat\alpha_* \over\longrightarrow   K_0(I_{\Theta})
\buildrel p\over \longrightarrow K_0(\mathcal{O}_{A_{\infty}}). 
 \end{equation}
 \begin{remark}
 In [Pask \& Raeburn 1996] \cite[Figure 1]{PasRae1} the notation 
 $K_0(\mathcal{O}_{A_{\infty}}\rtimes_{\alpha}\mathbb{T})$ is used. 
 The latter is proved isomorphic to $K_0(\mathcal{O}_{A_{\infty}^{\alpha}})$
  [Pask \& Raeburn 1996] \cite[p. 432]{PasRae1}, where  
  $\mathcal{O}_{A_{\infty}^{\alpha}}$ is isomorphic to the AF-core $\mathscr{F}_{A_{\infty}}:=I_{\Theta}$ 
  of the Cuntz-Pimsner algebra $\mathcal{O}_{A_{\infty}}$ [Pask \& Raeburn 1996] \cite[Lemma 2.2.3]{PasRae1}. 
 \end{remark}

\bigskip
(ii)  On the other hand, each two-sided ideal $I_{\Theta}\subset \mathbb{A}(S_{g,n})$ 
gives rise to a short exact sequence of the $K_0$-groups 
  [Blackadar 1986] \cite[Section 2.3]{Bla}: 
 \begin{equation}\label{eq3.8}
 K_0(I_{\Theta})
 \buildrel  i \over\longrightarrow   K_0( \mathbb{A}(S_{g,n}))
\buildrel \pi\over \longrightarrow K_0(\mathbb{A}_{\Theta}). 
 \end{equation}

\begin{figure}
\begin{picture}(100,100)(0,150)

\put(0,200){\vector(1,1){15}}
\put(0,190){\vector(1,-1){15}}

\put(85,218){\vector(1,0){15}}
\put(85,172){\vector(1,0){15}}

\put(47,205){\vector(0,-1){15}}
\put(125,205){\vector(0,-1){15}}


\put(-40,193){$K_0(I_{\Theta})$}
\put(30,215){$K_0(I_{\Theta})$}
\put(25,170){$K_0( \mathbb{A}(S_{g,n}))$}

\put(110,215){$ K_0(\mathcal{O}_{A_{\infty}})$}
\put(115,170){$K_0(\mathbb{A}_{\Theta})$}

\put(-25,215){$1-\hat\alpha_*$}
\put(-10,175){$i$}
\put(30,195){$i$}
\put(135,195){$i$}

\put(90,225){$p$}
\put(90,180){$\pi$}


\end{picture}
\caption{} 
\end{figure}

\bigskip
(iii) Formulas (\ref{eq3.7}) and (\ref{eq3.8}) define a commutative diagram in Figure 1.    
The arrow closure of the diagram defines an injective map 
$i:  K_0(\mathcal{O}_{A_{\infty}})\to K_0(\mathbb{A}_{\Theta})$.

\bigskip
Lemma \ref{lm3.5} is proved. 
\end{proof}

\begin{lemma}\label{lm3.7}
The map $i:  K_0(\mathcal{O}_{A_{\infty}})\to K_0(\mathbb{A}_{\Theta})$
coincides pointwise with the   Minkowski question-mark function. 
\end{lemma}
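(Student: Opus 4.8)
The plan is to reduce the statement to the uniqueness half of Panti's Theorem \ref{thm2.1}, in the form recorded in Remark \ref{rmk2.2}: the $m$-dimensional Minkowski function $?^m$ is the \emph{unique} homeomorphism of $\mathbf{R}^m/\mathbf{Z}^m$ that conjugates the M\"onkemeyer map $M$ to the tent map $T$, where $m=6g-7+2n$. Accordingly I would show that the map $i_*\colon\Theta\mapsto\Lambda_\Theta\bmod\mathbf{Z}$ of Remark \ref{rmk1.2+} is a homeomorphism intertwining $M$ and $T$, and then let uniqueness finish the job.

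First I would check that $i_*$ is a homeomorphism of $\mathbf{R}^m/\mathbf{Z}^m$. Injectivity is exactly the content of Lemma \ref{lm3.5}. Continuity comes from Theorem \ref{thm2.4}, under which $\Theta\mapsto I_\Theta$ is a continuous parametrization of $Prim~\mathbb{A}(S_{g,n})$ in the Jacobson topology, so that passing to $K_0$ of the quotient $\mathbb{A}_\Theta$ depends continuously on $\Theta$. Surjectivity follows from the converse assertion of Remark \ref{rmk1.2+}, since every lattice $\Lambda_\Theta$ arises from some primitive ideal; as domain and range are the compact torus $\mathbf{R}^m/\mathbf{Z}^m$, a continuous bijection is automatically a homeomorphism.

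The heart of the argument is the intertwining relation $i_*\circ M=T\circ i_*$. On the domain the natural dynamics is the Jacobi-Perron shift $[0;a_1,a_2,\dots]\mapsto[0;a_2,a_3,\dots]$, which in Panti's normalization is the M\"onkemeyer map $M$; on the range the natural dynamics is the one-sided dyadic shift, realized by Panti's tent map $T$. The combinatorial content of Lemma \ref{lm3.2} is precisely that the block pattern (\ref{eq3.4}) prescribing the modifiable $\{0,1\}$-entries of $B_\Theta$ reproduces the Minkowski pattern (\ref{eq1.1}). Tracing this through the commutative diagram of Lemma \ref{lm3.5} and the AF-core identification $\mathscr{F}_{A_\infty}\cong I_\Theta$, the image $i_*(\Theta)$ is the lattice in $K_0(\mathbb{A}_\Theta)\cong\mathbf{Z}^{6g-6+2n}$ whose generators have binary expansions equal to the $\{0,1\}$-sequence of $B_\Theta$. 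Deleting the leading partial quotient $a_1$ shifts the blocks $L_{a_1-1}\circ R_{a_2}\circ\cdots$ by one group on the continued-fraction side, which is exactly a dyadic shift on the binary side; this is the conjugacy $i_*\circ M=T\circ i_*$.

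The hard part will be making this middle step rigorous in all dimensions $m\ge1$, rather than only in the case $g=n=1$ treated explicitly in Lemma \ref{lm3.2}. Two points demand care: that the dimension group $K_0(\mathbb{A}_\Theta)$ truly reads the $\{0,1\}$-pattern of $B_\Theta$ as the binary expansion of the lattice generators, and that the higher-dimensional Jacobi-Perron shift coincides with $M$ in Panti's normalization. As a consistency check I would verify the three arithmetic properties of Theorem \ref{thm2.1}: a periodic Jacobi-Perron expansion, i.e. an algebraic $\Theta$ of degree $m+1$, yields a block-periodic $B_\Theta$, hence by Lemma \ref{lm3.4} a torsion group $K_0(\mathcal{O}_{A_\infty})$ and an eventually periodic, non-terminating binary image, a non-dyadic rational, matching property (ii); terminating expansions give dyadic rationals and generic irrationals give irrationals. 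With the conjugacy in hand, Remark \ref{rmk2.2} forces $i_*=?^m$, which is the assertion of the lemma.
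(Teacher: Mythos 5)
Your route is genuinely different from the paper's. The paper proves the lemma by checking, case by case, that the map $i$ satisfies the three arithmetic properties of Theorem \ref{thm2.1}: for rational tuples the matrix $A_{\infty}$ reduces to a finite block, $\mathcal{O}_{A_{\infty}}$ becomes a Cuntz algebra $\mathcal{O}_n$ with $n=2$ forced by the inclusion $K_0(\mathcal{O}_n)\subset K_0(\mathbb{A}_{\Theta})\cong\mathbf{Z}^m$, and $K_0(A_{2^{\infty}})\cong\mathbf{Z}[\frac{1}{2}]$ produces the dyadic rationals; for algebraic tuples of degree $m+1$ the Jacobi--Perron fraction is eventually periodic, so $A_{\infty}$ is block-periodic and $K_0(\mathcal{O}_{A_{\infty}})$ is infinite torsion by Lemma \ref{lm3.4}, giving non-dyadic rationals; for the remaining irrationals the fraction is aperiodic and $K_0(\mathcal{O}_{A_{\infty}})$ is non-torsion. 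Uniqueness in Theorem \ref{thm2.1} then closes the argument. What you relegate to a ``consistency check'' at the end is therefore the whole of the paper's proof; your main line --- establishing the conjugacy $i_*\circ M=T\circ i_*$ and invoking the uniqueness statement of Remark \ref{rmk2.2} --- is an alternative the paper does not pursue.

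The difficulty is that your central step is exactly the one you leave open. The identification of the Jacobi--Perron shift with Panti's M\"onkemeyer map $M$, and of the tent map $T$ with the one-sided dyadic shift, is supplied by nothing in the paper and is not automatic: $T$ is conjugate to the doubling map only through a folded (Gray-code) recoding of binary digits, and Panti's $M$ generates the M\"onkemeyer--Selmer algorithm rather than Jacobi--Perron on the nose, so the intertwining relation would need a careful normalization argument valid for every $m\ge 1$, not just the $g=n=1$ case where Lemma \ref{lm3.2} is actually worked out. Until that is supplied, your proof in fact rests on the arithmetic verification you describe last, which coincides with the paper's argument. One point where your write-up is stronger: for either uniqueness statement to apply one must also know that the map is one-to-one and continuous, and you at least sketch this via Lemma \ref{lm3.5}, Theorem \ref{thm2.4} and compactness of the torus, whereas the paper's proof of Lemma \ref{lm3.7} verifies only the arithmetic properties and never addresses continuity.
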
 
\begin{proof}
The proof consists in a step-by-step verification of properties (i)-(iii) of  Theorem \ref{thm2.1}
which the map (\ref{eq1.4}) must satisfy.

\bigskip
(i) Let $\left(\frac{p_1}{q_1},\dots, \frac{p_m}{q_m}\right)\in \mathbf{Q}^m/\mathbf{Z}^m$ 
be an $m$-tuple of the rational numbers. 
In view of Lemma \ref{lm3.2}, this case corresponds to a matrix $A_{\infty}= diag ~\{B\}$, where $B$ is a
finite block. In other words, $\mathcal{O}_{A_{\infty}}\cong \mathcal{O}_n$ is a Cuntz algebra. 
In particular, the isomorphism (\ref{eq2.3}) degenerates to an isomorphism:
\begin{equation}\label{eq3.9}
(A_{n^{\infty}}\otimes\mathcal{K})\rtimes_{\alpha}\mathbf{Z} \cong \mathcal{O}_n\otimes\mathcal{K}, 
\end{equation}
where $A_{n^{\infty}}$ is a uniformly hyperfinite (UHF) algebra corresponding 
to the supernatural number $n^{\infty}$ and  $\alpha$ is multiplication by $n$ automorphism 
of  $A_{n^{\infty}}$ [Blackadar  1986] \cite[Example 10.11.8]{Bla}. 
On the other hand, an analog of Lemma \ref{lm3.2} implies 
\begin{equation}\label{eq3.10}
\frac{\mathbf{Z}}{(n-1)\mathbf{Z}}\cong K_0(\mathcal{O}_n)\subset K_0(\mathbb{A}_{\Theta})\cong\mathbf{Z}^m . 
\end{equation}
Clearly, the inclusion (\ref{eq3.10}) is true if and only if $n=2$. 
Recall that $K_0(A_{2^{\infty}})\cong \mathbf{Z}[\frac{1}{2}]$ 
is the additive group of dyadic rationals
 [Blackadar  1986] \cite[Section 7.5]{Bla}.
Thus one gets a map
\begin{equation}\label{eq3.11}
\Theta=\left(\frac{p_1}{q_1},\dots, \frac{p_m}{q_m}\right)\mapsto A_{2^{\infty}} \mapsto
K_0(A_{2^{\infty}})\cong \mathbf{Z}\left[\frac{1}{2}\right].
\end{equation}
In other words, the map  (\ref{eq3.11}) implies property (i) of  Theorem \ref{thm2.1}.

\bigskip
(ii)  Let $\left(\theta_1,\dots, \theta_m\right)\in \overline{\mathbf{Q}}^m/\mathbf{Z}^m$ 
be an $m$-tuple of the algebraic numbers of degree $m+1$ over $\mathbf{Q}$. 
In this case the Jacobi-Perron fraction (\ref{eq3.1}) is periodic except, possibly, a finite
number of terms. In particular, the matrix $A_{\infty}\in GL_{\infty}(\mathbf{Z})$ 
must be block-periodic (Lemma \ref{lm3.2}). 
Therefore the group $K_0(\mathcal{O}_{A_{\infty}})$ is an infinite torsion
group (Lemma \ref{lm3.4}).  Finally, Lemma \ref{lm3.5} gives us a map:  
\begin{equation}\label{eq3.12}
 \overline{\mathbf{Q}}^m/\mathbf{Z}^m
 \ni \left(\theta_1,\dots, \theta_m\right)
\mapsto 
\left(\frac{p_1}{q_1},\dots, \frac{p_m}{q_m}\right)\in \left(\mathbf{Q}-\mathbf{Z}\left[\frac{1}{2}\right]\right)^m/\mathbf{Z}^m. 
\end{equation}
Property (ii) of  Theorem \ref{thm2.1} follows from  (\ref{eq3.12}).

\bigskip
(iii)  If $\left(\theta_1,\dots, \theta_m\right)\in \mathbf{R}^m/\mathbf{Z}^m$ 
is an $m$-tuple of the remaining irrational numbers, 
then the Jacobi-Perron continued fraction (\ref{eq3.1}) is aperiodic. 
In this case  matrix $A_{\infty}\in GL_{\infty}(\mathbf{Z})$ 
is not block-periodic and, therefore,  
the group $K_0(\mathcal{O}_{A_{\infty}})$ is no longer a torsion
group.  Thus  Lemma \ref{lm3.5} defines a map:  
\begin{equation}\label{eq3.13}
 \mathbf{R}^m/\mathbf{Z}^m
 \ni \left(\theta_1,\dots, \theta_m\right)
\mapsto 
 \left(\theta_1',\dots, \theta_m'\right)\in
\left(\mathbf{R}-\mathbf{Q}\right)^m/\mathbf{Z}^m. 
\end{equation}
Property (iii) of  Theorem \ref{thm2.1} follows from  (\ref{eq3.13}). 

\bigskip
Lemma \ref{lm3.7} is proved. 
\end{proof}

\bigskip
Theorem \ref{thm1.3} follows from Lemma \ref{lm3.7}.

\subsection{Proof of corollary \ref{cor1.4}}
Corollary \ref{cor1.4} follows from Theorem \ref{thm1.3} and \cite[Theorem 1.3]{Nik1}. 
Indeed, let $F$ be a functor on the category of $n$-dimensional projective varieties $V(k)$ with values in the 
triples $(\Lambda, [I], K)$, where  $K\subset\mathbf{R}$ is a  number field, 
$\Lambda\subseteq O_K$  is an order in the ring of integers  $O_K$ of the field $K$  and $[I]\subset\Lambda$
is an ideal class  \cite[Theorem 1.3]{Nik1}.
Moreover, one gets a grading  $\{(\Lambda^i, [I], K) ~|~ 0\le i\le 2n\}$ by  the subrings $\Lambda^i\subseteq\Lambda$ as specified 
in \cite[p. 271]{Nik3}.
Likewise,   Theorem \ref{thm1.3}  defines  a functor on the
triples  $(\Lambda^i, [I], K)\cong  (K_0(\mathbb{A}_{\Theta_i}), K_0^+(\mathbb{A}_{\Theta_i}))$ \cite[Section 7.3]{Bla}
with values in the similarity classes of matrices $\{A^i_{\infty}\in GL_{\infty}(\mathbf{Z})  ~|~0\le i\le 2n\}$.  
On the other hand,  Serre and Deninger used an infinite-dimensional cohomology $H_{ar}^i(V)$ 
to define a functor on $V(k)$ with values in the similarity classes of matrices   
$\{Fr^i_{\infty}\in GL_{\infty}(\mathbf{Z})  ~|~0\le i\le 2n\}$ . The latter represent Frobenius 
action on  the cohomology groups $H_{ar}^i(V)$  [Deninger 1991]  \cite[Theorem 4.1]{Den1} and
[Serre 1970] \cite{Ser1}. Thus  one gets a commutative diagram in Figure 2. 
It follows from the diagram that
$\{Fr_{\infty}^i\cong A_{\infty}^i ~|~ 0\le i\le 2n\}$,
where $\cong$ is the similarity of matrices in the group $GL_{\infty}(\mathbf{Z})$.

\bigskip
Corollary \ref{cor1.4} is proved. 

\begin{figure}
\begin{picture}(100,100)(60,150)

\put(85,218){\vector(1,0){45}}
\put(85,172){\vector(1,0){45}}

\put(47,205){\vector(0,-1){20}}
\put(185,205){\vector(0,-1){20}}

\put(35,215){$V(k)$}
\put(25,170){$(\Lambda, [I], K)$}

\put(145,215){$\{Fr^i_{\infty} ~|~0\le i\le 2n\}$}
\put(145,170){$\{A^i_{\infty} ~|~0\le i\le 2n\} $}

\put(30,195){$F$}

\put(85,223){{\tiny Serre-Deninger}}
\put(88,207){\tiny {cohomology}}
\put(87,178){{\tiny Theorem \ref{thm1.3}}}

\put(190,190){{\tiny matrix similarity}}
\put(205,197){$\cong$}


\end{picture}
\caption{} 
\end{figure}

  
   
  

\bibliographystyle{amsplain}


\end{document}